\def\th@plain{%
  \upshape 
}
\renewenvironment{proof}[1][\proofname]{\par
  \pushQED{\qed}%
  \normalfont \topsep6\p@\@plus6\p@\relax
  \trivlist
  \item[\hskip\labelsep
        \bfseries
    #1\@addpunct{.}]\ignorespaces
}{%
  \popQED\endtrivlist\@endpefalse
}
\newtheorem{theorem}{Theorem}
\numberwithin{theorem}{section}
\newtheorem{lemma}{Lemma}
\newtheorem{corollary}{Corollary}
\newtheorem{conjecture}{Conjecture}
\newtheorem*{conjecture*}{Conjecture}
\newtheorem{claim}{Claim}
\theoremstyle{definition}
\newtheorem{remark}{Remark}
\newcounter{Hcase}
\newcounter{Hclaim}
\newcommand{\etal}{et~al.\ }
\def\int(#1){\mathrm{int}(#1)}
\def\ext(#1){\mathrm{ext}(#1)}
\def\Int(#1){\mathrm{Int}(#1)}
\def\Ext(#1){\mathrm{Ext}(#1)}
\def\ad(#1){\mathrm{ad}(#1)}
\def\mad(#1){\mathrm{mad}(#1)}
\def\la(#1){\mathrm{la}(#1)}
\begin{document}%
\title{Strong chromatic index of $k$-degenerate graphs}
\author{Tao Wang\,\textsuperscript{a, b, }\footnote{{\tt Corresponding
author: wangtao@henu.edu.cn} }\\
{\small \textsuperscript{a}Institute of Applied Mathematics}\\
{\small Henan University, Kaifeng, 475004, P. R. China}\\
{\small \textsuperscript{b}College of Mathematics and Information Science}\\
{\small Henan University, Kaifeng, 475004, P. R. China}}
\date{}
\maketitle
\begin{abstract}%
A {\em strong edge coloring} of a graph $G$ is a proper edge coloring in which every color class is an induced matching. The {\em strong chromatic index} $\chiup_{s}'(G)$ of a graph $G$ is the minimum number of colors in a strong edge coloring of $G$. In this note, we improve a result by D{\k e}bski \etal [Strong chromatic index of sparse graphs, arXiv:1301.1992v1] and show that the strong chromatic index of a $k$-degenerate graph $G$ is at most $(4k-2) \cdot \Delta(G) - 2k^{2} + 1$. As a direct consequence, the strong chromatic index of a $2$-degenerate graph $G$ is at most $6\Delta(G) - 7$, which improves the upper bound $10\Delta(G) - 10$ by Chang and Narayanan [Strong chromatic index of 2-degenerate graphs, J. Graph Theory 73 (2013) (2) 119--126]. For a special subclass of $2$-degenerate graphs, we obtain a better upper bound, namely if $G$ is a graph such that all of its $3^{+}$-vertices induce a forest, then $\chiup_{s}'(G) \leq 4 \Delta(G) -3$; as a corollary, every minimally $2$-connected graph $G$ has strong chromatic index at most $4 \Delta(G) - 3$. Moreover, all the results in this note are best possible in some sense. 
\end{abstract}
\section{Introduction}
A {\em strong edge coloring} of a graph $G$ is a proper edge coloring in which every color class is an induced matching. That is, an edge coloring is {\em strong} if for each edge $uv$, the color of $uv$ is distinct from the colors of the edges (other than $uv$) incident with $N_{G}(u) \cup N_{G}(v)$. The {\em strong chromatic index} $\chiup_{s}'(G)$ of a graph $G$ is the minimum number of colors in a strong edge coloring of $G$. The {\em degree} of a vertex $v$ in $G$, denoted by $\deg(v)$, is the number of incident edges of $v$ in $G$. A vertex of degree $k$, at most $k$ and at least $k$ are called a $k$-vertex, $k^{-}$-vertex and $k^{+}$-vertex, respectively. The graph $(\emptyset, \emptyset)$ is an {\em empty graph}, and $(V, \emptyset)$ is an {\em edgeless graph}. We denote the minimum and maximum degrees of vertices in $G$ by $\delta(G)$ and $\Delta(G)$, respectively. 

In 1985, Edr{\" o}s and Ne{\v s}et{\v r}il \cite{MR975526} constructed graphs with strong chromatic index $\frac{5}{4}\Delta^{2}$ when $\Delta$ is even, $\frac{1}{4}(5\Delta^{2} - 2 \Delta + 1)$ when $\Delta$ is odd. Inspired by their construction, they proposed the following strong edge coloring conjecture.
\begin{conjecture}%
If $G$ is a graph with maximum degree $\Delta$, then
\[
\chiup_{s}'(G) \leq \left \{
\begin{array}{ll}
\frac{5}{4} \Delta^{2}, & \text{if $\Delta$ is even;}\\
\frac{1}{4}(5\Delta^{2} - 2 \Delta + 1), & \text{if $\Delta$ is odd.} \qed
\end{array}
\right. 
\]
\end{conjecture}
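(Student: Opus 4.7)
The statement is the celebrated Erd\H{o}s--Ne\v{s}et\v{r}il conjecture from 1985, and it is still open; any ``plan'' I can offer is only an outline of the most natural line of attack, not a route to a complete proof. The first step is to reformulate: a strong edge coloring of $G$ is precisely a proper vertex coloring of $L(G)^{2}$, the square of the line graph of $G$, so one seeks $\chi(L(G)^{2}) \le \frac{5}{4}\Delta^{2}$ in the even case. The trivial bound $\Delta(L(G)^{2}) + 1 \le 2\Delta(\Delta-1) + 1 \approx 2\Delta^{2}$ is far off, so the task is to exploit the rich local structure: the closed neighborhood of any vertex of $L(G)^{2}$ decomposes into two cliques of size $\Delta$ corresponding to the endpoints of the underlying edge in $G$, and these two cliques overlap significantly whenever $G$ contains triangles on the incident vertices.

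My plan would be to follow the probabilistic strategy initiated by Molloy and Reed. Assign each edge of $G$ a color uniformly at random from a palette of size $c\Delta^{2}$, retain the color only if no conflict arises in its strong-distance sphere, and apply the Lov\'asz Local Lemma to guarantee that a positive fraction of edges become happily colored while the effective maximum degree on the uncolored subgraph drops to $(1-\varepsilon)\Delta$. Iterating this reservation step and then finishing greedily progressively sharpens the multiplicative constant. Pushing it all the way down to $\frac{5}{4}$ requires a careful estimate of how often two edges at $L(G)^{2}$-distance one accidentally receive the same random color, which in turn reduces to bounding the number of ``triangles'' in $L(G)^{2}$, i.e., triples of pairwise strongly-adjacent edges in $G$.

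The hard part, and in my judgment the decisive obstacle, is that the constant $\frac{5}{4}$ is tight: the conjectured bound is attained by blow-ups of $C_{5}$, the original Erd\H{o}s--Ne\v{s}et\v{r}il extremal construction. Near such configurations the probabilistic savings vanish, so a purely random argument cannot close the final gap. I therefore expect any successful proof to demand a structural dichotomy: if $G$ locally resembles a $C_{5}$-blow-up, construct an explicit near-optimal coloring using its rotational symmetry; if $G$ is locally far from it, quantify the excess local sparsity and feed it into a strengthened probabilistic bound. Executing either half in isolation appears tractable --- and indeed the paper under discussion succeeds on the sparse side by restricting to $k$-degenerate graphs --- but combining them uniformly in $\Delta$ is where every existing probabilistic attack has stopped short.
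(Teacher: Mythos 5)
This statement is the Erd\H{o}s--Ne\v{s}et\v{r}il conjecture; the paper states it precisely as a \emph{conjecture} and offers no proof of it, so there is no argument in the paper to compare yours against. You correctly recognize this and explicitly decline to claim a proof, which is the right call: what you have written is a survey of the standard line of attack, not a proof, and it should not be presented as one. The outline itself is accurate as far as it goes. The reformulation as proper coloring of $L(G)^{2}$, the decomposition of each closed neighborhood into two cliques of size at most $\Delta$, the Molloy--Reed style iterated random coloring with the Lov\'asz Local Lemma, and the identification of the blow-up of $C_{5}$ as the extremal configuration are all consistent with the literature. It is worth being precise, though, that this probabilistic program has so far only reached constants near $1.772$ (after Molloy--Reed's original $1.998$ and subsequent improvements), all for sufficiently large $\Delta$; the gap down to $\frac{5}{4}$ is not a matter of sharpening estimates but, as you say, of handling graphs that locally resemble the extremal example, where the sparsity savings genuinely vanish. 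No one has carried out the structural half of the dichotomy you propose.

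The one substantive caution: do not let the phrase ``proof proposal'' suggest that iterating the reservation step ``progressively sharpens the multiplicative constant'' all the way to $\frac{5}{4}$. It does not, and cannot without new ideas, precisely because the local sparsity hypothesis fails near $C_{5}$-blow-ups. The paper you are reading sidesteps the conjecture entirely by working with $k$-degenerate graphs, where a greedy argument along a carefully chosen edge ordering suffices and yields bounds linear in $\Delta$; that technique has no bearing on the quadratic conjecture itself. In short: your assessment of the problem's status is correct, your sketch is a fair summary of known partial progress, and the decisive obstacle you name is the real one --- but there is no proof here, and the paper does not contain one either.
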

A graph is {\em $k$-degenerate} if every subgraph has a vertex of degree at most $k$. Chang and Narayanan \cite{MR3056878} showed the strong chromatic index of a $2$-degenerate graph $G$ is at most $10\Delta(G) - 10$. Recently, Luo and Yu \cite{2012arXiv1212.6092L} improved the upper bound to $8\Delta(G) - 4$. For general $k$-degenerate graphs, D{\k e}bski \etal \cite{2013arXiv1301.1992D} presented an upper bound $(4k - 1) \cdot \Delta(G) - k(2k + 1) + 1$. Very recently, Yu \cite{2012arXiv1212.6093Y} obtained an improved upper bound $(4k-2) \cdot \Delta(G) - 2k^{2} + k + 1$. In this note, we use the method developed in \cite{2013arXiv1301.1992D} and improve the upper bound to $(4k-2) \cdot \Delta(G) - 2k^{2} + 1$. In particular, when $G$ is a $2$-degenerate graph, the strong chromatic index is at most $6\Delta(G) - 7$, which improves the upper bound $10\Delta(G) - 10$ by Chang and Narayanan \cite{MR3056878}. In addition, we show that if $G$ is a graph such that all of its $3^{+}$-vertices induce a forest, then $\chiup_{s}'(G) \leq 4 \Delta(G) -3$.
\section{Results}
\begin{lemma}[Chang and Narayanan \cite{MR3056878}]\label{nice-vertex}%
If $G$ is a $k$-degenerate graph with at least one edge, then there exists a vertex $w$ such that at least $\max\{1, \deg(w) - k\}$ of its neighbors are $k^{-}$-vertices.
\end{lemma}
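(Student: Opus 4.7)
The plan is to apply $k$-degeneracy not to $G$ itself, but to the subgraph induced by the ``bad'' vertices, so that the degree drop inside this subgraph forces the existence of many $k^{-}$-neighbors in $G$. Concretely, I would let $H$ be the subgraph of $G$ induced by the set of $(k+1)^{+}$-vertices of $G$.

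If $H$ is empty then every vertex of $G$ is already a $k^{-}$-vertex, so any vertex $w$ incident with an edge has all of its $\deg(w)\le k$ neighbors being $k^{-}$-vertices, and in particular at least $\max\{1,\deg(w)-k\}=1$ of them. Otherwise $H$ is a non-empty subgraph of the $k$-degenerate graph $G$; since $k$-degeneracy is hereditary, $H$ is itself $k$-degenerate and hence contains a vertex $w$ with $\deg_{H}(w)\le k$. By the definition of $H$ this $w$ satisfies $\deg_{G}(w)\ge k+1$, and the neighbors of $w$ lying in $H$ are exactly its $(k+1)^{+}$-neighbors in $G$. Therefore the number of $k^{-}$-neighbors of $w$ in $G$ is $\deg_{G}(w)-\deg_{H}(w)\ge \deg_{G}(w)-k\ge 1$, which is the desired bound $\max\{1,\deg(w)-k\}$.

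The argument is essentially a one-line application of hereditary degeneracy to a carefully chosen induced subgraph, so there is no real obstacle; the only point requiring any care is the trivial case where no $(k+1)^{+}$-vertex exists, which one handles by picking any non-isolated vertex to satisfy the ``$\max\{1,\cdot\}$'' clause.
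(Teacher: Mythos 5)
Your proof is correct. The paper itself states this lemma with a citation to Chang and Narayanan and gives no proof, so there is nothing internal to compare against; your argument---pass to the induced subgraph $H$ on the $(k+1)^{+}$-vertices, use heredity of $k$-degeneracy to find $w$ with $\deg_{H}(w)\le k$, and count $\deg_{G}(w)-\deg_{H}(w)\ge\deg_{G}(w)-k\ge 1$ neighbors outside $H$---is the standard one, and your handling of the degenerate case $H=\emptyset$ (which implicitly requires $G$ to have at least one edge, as the lemma tacitly assumes) is fine.
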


\begin{theorem}\label{k-degenerate}%
If $G$ is a $k$-degenerate graph with maximum degree $\Delta$ and $k \leq \Delta$, then $\chiup_{s}'(G) \leq (4k-2) \Delta - 2k^{2} + 1$.
\end{theorem}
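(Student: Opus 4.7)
Plan: The argument proceeds by induction on $|E(G)|$; the edgeless case is immediate since $(4k-2)\Delta - 2k^{2} + 1 \geq 1$ whenever $k \leq \Delta$. For the inductive step I would delete a carefully chosen edge $uw$, apply induction to $G - uw$ (which is again $k$-degenerate with maximum degree at most $\Delta$), and extend the resulting strong edge coloring by finding an available color for $uw$. It therefore suffices to exhibit an edge $uw$ at which at most $(4k-2)\Delta - 2k^{2}$ colors are forbidden.

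The edge $uw$ would be chosen via Lemma~\ref{nice-vertex}: take $w$ to be a vertex with at least $\max\{1, \deg(w) - k\}$ $k^{-}$-neighbors and let $u$ be one such $k^{-}$-neighbor, so $\deg(u) \leq k$. Writing $d_u = \deg(u)$ and $d_w = \deg(w)$, the number of colors forbidden at $uw$ is bounded by the number of already-colored edges at line-graph distance at most one from $uw$. I would split this count into (a) the at most $d_u + d_w - 2$ edges incident with $u$ or $w$, (b) at most $\sum_{x \in N(u) \setminus \{w\}} (\deg(x) - 1) \leq (k-1)(\Delta - 1)$ edges meeting $N(u) \setminus \{w\}$ but not $u$, and (c) an analogous contribution $\sum_{y \in N(w) \setminus \{u\}} (\deg(y) - 1)$ from $N(w) \setminus \{u\}$.

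The crux is bounding the sum in (c). If $d_w \leq k$, the trivial estimate $(k-1)(\Delta - 1)$ suffices and the total forbidden count becomes at most $2(k-1)\Delta \leq (4k-2)\Delta - 2k^{2}$, using $\Delta \geq k$. If $d_w > k$, Lemma~\ref{nice-vertex} guarantees at least $d_w - k$ $k^{-}$-neighbors of $w$; since $u$ is one of them, $N(w) \setminus \{u\}$ contains at least $d_w - k - 1$ vertices of degree $\leq k$ (contributing $\leq k-1$ each) and at most $k$ vertices of degree $\leq \Delta$ (contributing $\leq \Delta - 1$ each), so (c) is at most $(d_w - k - 1)(k - 1) + k(\Delta - 1)$. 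Summing all contributions and using $d_w \leq \Delta$ bounds the forbidden colors by $(3k - 1)\Delta - k^{2} - k$, and the required inequality $(3k-1)\Delta - k^{2} - k \leq (4k-2)\Delta - 2k^{2}$ rearranges to $(k-1)(\Delta - k) \geq 0$, which is exactly the hypothesis $k \leq \Delta$.

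The main obstacle is the $d_w > k$ case: to beat the earlier bound $(4k-1)\Delta - k(2k+1) + 1$, one must fully exploit that the chosen $k^{-}$-neighbor $u$ of $w$ is itself counted among the $k^{-}$-neighbors of $w$ produced by Lemma~\ref{nice-vertex}, thereby saving one $\Delta - 1$ term in the estimate of (c). The remaining work is routine algebraic simplification and a check that the induction hypothesis still applies to $G - uw$ even when $\Delta(G - uw) < k$, which follows from a direct coloring estimate in that small regime.
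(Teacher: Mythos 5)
Your induction by edge deletion does not go through: a strong edge coloring of $G - uw$ need not be a partial strong edge coloring of $G$. Deleting $uw$ changes the constraint structure everywhere around it, not just at $uw$ itself: in $G$, every edge incident with $u$ must differ in color from every edge incident with $w$ (their endpoints are joined by the edge $uw$), whereas in $G - uw$ no such constraint exists, so the coloring supplied by the inductive hypothesis may already violate it, and no choice of color for $uw$ can repair that. Concretely, for $G = C_{5}$ with vertices $1, \dots, 5$ and $uw = 12$, the assignment $23 \mapsto a$, $34 \mapsto b$, $45 \mapsto c$, $51 \mapsto a$ is a strong edge coloring of the path $G - 12$, yet $23$ and $51$ must receive distinct colors in $C_{5}$; the coloring cannot be extended. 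The same defect shows up quantitatively: your count of forbidden colors only inspects edges incident with $N(u) \cup N(w)$ taken in the current (edge-deleted) graph, so it omits the colored edges hanging off \emph{former} neighbors of $u$ and $w$, i.e.\ vertices $x$ with $xu$ or $xw$ removed earlier in the recursion but still carrying many colored edges, all of which constrain $uw$ in $G$. That omission is why your final estimate $(3k-1)\Delta - k^{2} - k + 1$ comes out strictly smaller than the theorem's $(4k-2)\Delta - 2k^{2} + 1$ whenever $\Delta > k \geq 2$ --- an elementary argument beating the entire line of prior work is a strong signal that constraints have been lost.

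The paper avoids both problems by never deleting edges from the constraint structure: it fixes a decomposition of $E(G)$ into stars $\Lambda_{1}, \dots, \Lambda_{m}$ (each produced by one application of \autoref{nice-vertex} to the residual graph), colors greedily in the reverse order $\Lambda_{m}, \dots, \Lambda_{1}$, and, when coloring $w_{i}v_{i}$, counts colored edges incident with $N_{G}(w_{i}) \cup N_{G}(v_{i})$ in the \emph{original} graph $G$. The point of its ``distinct centers'' claim is precisely to control the terms you dropped: a vertex of $N_{G}(w_{i}) \setminus N_{G_{i}}(w_{i})$ must be the center of an earlier star, hence is incident with at most $k-1$ colored edges, and its existence forces $\deg_{G}(w_{i}) \leq k$. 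To salvage your argument you would have to recast it as a greedy coloring in a fixed order with all conflicts measured in $G$ and then add the contributions of these ghost neighbors; doing so is essentially the paper's proof and pushes the count back up to $(4k-2)\Delta - 2k^{2}$.
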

\begin{proof}%
By the definition of $k$-degenerate graph, every subgraph of $G$ is also a $k$-degenerate graph. We want to obtain a sequence $\Lambda_{1}, \dots, \Lambda_{m}$ of subsets of edges as follows. Let $\Lambda_{0} = \emptyset$. Suppose that $\Lambda_{i-1}$ is well-defined, let $G_{i}$ be the graph $G - (\Lambda_{0} \cup \dots \cup \Lambda_{i-1})$. Notice that $G_{i}$ is an edgeless graph or a $k$-degenerate graph with at least one edge. Denote the degree of $v$ in $G_{i}$ by $\deg_{i}(v)$. If $G_{i}$ has at least one edge, then we choose a vertex $w_{i}$ of $G_{i}$ as described in \autoref{nice-vertex}, and let
\[
\Lambda_{i} = \{\,w_{i}v \mid w_{i}v \in E(G_{i}) \mbox{ and } \deg_{i}(v) \leq k\,\}.
\]
\autoref{nice-vertex} guarantees $\Lambda_{i} \neq \emptyset$, and this process terminates with a subset $\Lambda_{m}$. Note that the subgraph induced by $\Lambda_{i}$ is a star with center $w_{i}$, so we call $w_{i}$ the {\em center of $\Lambda_{i}$}. 
\begin{claim}%
If $i \neq j$, then $\Lambda_{i}$ and $\Lambda_{j}$ have distinct centers. 
\end{claim}
{\noindent\bf Proof of the Claim.}
Suppose to the contrary that there exists $j < i$ such that $\Lambda_{i}$ and $\Lambda_{j}$ have the same center $w$. Let $wv$ be an edge in $\Lambda_{i}$. From the construction, the vertex $w$ is the center of $\Lambda_{j}$, thus $\deg_{j+1}(w) \leq k$ and $\deg_{i}(w) \leq \deg_{j+1}(w) \leq k$. The fact $wv \notin \Lambda_{j}$ implies that $\deg_{j}(v) > k$. Since $wv \in \Lambda_{i}$, it follows that $\deg_{i}(v) \leq k$, and then there exists $j < t < i$ such that $v$ is the center of $\Lambda_{t}$. But $wv \notin \Lambda_{t}$, which leads to a contradiction that $\deg_{j+1}(w) \leq k < \deg_{t}(w)$. This completes the proof of the claim. \qed

We color the edges from $\Lambda_{m}$ to $\Lambda_{1}$, we remind the readers this is the reverse order of the ordinary sequence. 

In the following, we want to give an algorithm to obtain a strong edge coloring of $G$. First, we can color the edges in $\Lambda_{m}$ with distinct colors. Now, we consider the edge $w_{i}v_{i}$, where $w_{i}v_{i} \in \Lambda_{i}$. We want to assign a color to $w_{i}v_{i}$ such that the resulting coloring is still a partial strong edge coloring. In order to guarantee the resulting coloring is a partial strong edge coloring in each step, any two edges which are incident with $N_{G}(w_{i}) \cup N_{G}(v_{i})$ must receive distinct colors.

Now, we compute the number of colored edges in $G_{i}$ which are incident with $N_{G}(w_{i}) \cup N_{G}(v_{i})$. Let $X_{i} = N_{G}(w_{i}) \setminus N_{G_{i}}(w_{i})$, let $x_{i} = |X_{i}|$, let $y_{i} = |\{\,v \mid w_{i}v \in E(G_{i}) \mbox{ and } \deg_{G_{i}}(v) \leq k\,\}|$, and let $z_{i} = |\{\,v \mid w_{i}v \in E(G_{i}) \mbox{ and } \deg_{G_{i}}(v) \geq k\,\}|$. Suppose that $x_{i} > 0$ and let $w$ be an arbitrary vertex in $X_{i}$. By the claim, the edge $ww_{i}$ is in some $\Lambda_{t}$ with center $w$, thus $\deg_{G}(w_{i}) \leq k$ and $w$ is incident with at most $k$ colored edges. Therefore, if $x_{i} \neq 0$, then the number of colored edges which are incident with $N_{G}(w_{i}) \setminus \{v_{i}\}$ is at most
\begin{align*}%
     & (x_{i} + y_{i} - 1) \cdot k + z_{i} \cdot \Delta\\
=    & (x_{i} + y_{i} - 1 + z_{i}) \cdot k + z_{i} \cdot (\Delta - k)\\
\leq & (k - 1)k + (k-2)(\Delta - k), \quad \text{note that $x_{i} + y_{i} + z_{i} = \deg_{G}(w_{i}) \leq k$ and $z_{i} \leq k-2$}\\
=   & (k-2)\Delta + k;
\end{align*}
if $x_{i} = 0$, then the number of colored edges which are incident with $N_{G}(w_{i}) \setminus \{v_{i}\}$ is at most
\begin{align*}%
     & (y_{i} - 1) \cdot k + z_{i} \cdot \Delta\\
=    & (y_{i} - 1 + z_{i}) \cdot k + z_{i} \cdot (\Delta - k)\\
\leq & (\Delta - 1) \cdot k + k \cdot (\Delta - k), \quad \text{note that $z_{i} \leq k$}\\
=    & 2k\Delta - k^{2} -k.
\end{align*}

Let $p_{i} = |N_{G}(v_{i}) \setminus N_{G_{i}}(v_{i})|$, and let $q_{i} = |N_{G_{i}}(v_{i})|$. If $\deg(v_{i}) > k$, then $p_{i} > 0$ and there exists some $s$ with $s < i$ such that $v_{i}$ is the center of $\Lambda_{s}$. It follows that $\deg_{s+1}(v_{i}) \leq k$ and for every edge $uv_{i}$ in $\Lambda_{s}$, the vertex $u$ is incident with at most $k-1$ colored edges. Therefore, if $\deg(v_{i}) > k$, then the number of colored edges which are incident with $N_{G}(v_{i}) \setminus \{w_{i}\}$ is at most
\begin{align*}%
     &(k-1) \cdot (\deg(v_{i}) - \deg_{s+1}(v_{i})) + (\deg_{s+1}(v_{i}) - 1) \cdot \Delta\\
=    & (k-1) \cdot \deg(v_{i}) + \deg_{s+1}(v_{i}) \cdot (\Delta - k + 1) - \Delta\\
\leq & (k-1) \Delta + k \cdot (\Delta - k +1) - \Delta, \quad \text{note that $\deg_{s+1}(v_{i}) \leq k$}\\
=    & 2(k-1)\Delta + k(1-k);
\end{align*}
if $\deg(v_{i}) \leq k$, then the number of colored edges which are incident with $N_{G}(v_{i}) \setminus \{w_{i}\}$ is at most $(k-1) \cdot \Delta$.

Hence, the number of colored edges incident with $N_{G}(w_{i}) \cup N_{G}(v_{i})$ is at most 
\begin{align*}%
 &\max\{(k-2)\Delta + k, 2k\Delta - k^{2} - k\} + \max\{2(k-1)\Delta + k(1-k), (k-1)\Delta\}\\
=&(2k\Delta - k^{2} - k) + 2(k-1)\Delta + k(1-k)\\
=&(4k-2) \Delta - 2k^{2}.
\end{align*}

Thus, there are at least one available color for $w_{i}v_{i}$. When all the edges are colored, we obtain a strong edge coloring of $G$.
\end{proof}

\begin{corollary}\label{Coro}%
If $G$ is a $2$-degenerate graph with maximum degree at least two, then $\chiup_{s}'(G) \leq 6\Delta(G) - 7$.
\end{corollary}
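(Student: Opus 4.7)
The plan is simply to specialize Theorem~\ref{k-degenerate} to the case $k=2$. The first step is to verify the hypothesis: Theorem~\ref{k-degenerate} requires $k \leq \Delta(G)$, and with $k=2$ this becomes $\Delta(G) \geq 2$, which is exactly the assumption stated in the corollary. Since $G$ is $2$-degenerate by hypothesis, Theorem~\ref{k-degenerate} applies to $G$ with $k=2$.

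The second (and essentially final) step is a one-line arithmetic substitution. Plugging $k=2$ into the bound $(4k-2)\Delta(G) - 2k^{2} + 1$ supplied by Theorem~\ref{k-degenerate} gives
\[
(4\cdot 2 - 2)\Delta(G) - 2\cdot 2^{2} + 1 \;=\; 6\Delta(G) - 8 + 1 \;=\; 6\Delta(G) - 7,
\]
which yields $\chiup_{s}'(G) \leq 6\Delta(G) - 7$ as claimed.

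There is no real obstacle here: the corollary is recorded separately only to emphasize the improvement over the bound $10\Delta(G)-10$ of Chang and Narayanan. The sole purpose of the assumption $\Delta(G) \geq 2$ is to ensure the condition $k \leq \Delta$ of Theorem~\ref{k-degenerate}; for $\Delta(G) \leq 1$ the expression $6\Delta(G)-7$ is negative anyway, so the hypothesis is needed for the statement to be meaningful as well as for the invocation of the theorem.
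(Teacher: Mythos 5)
Your proposal is correct and matches the paper's (implicit) derivation exactly: the corollary is obtained by substituting $k=2$ into Theorem~\ref{k-degenerate}, with the hypothesis $\Delta(G)\geq 2$ serving precisely to meet the condition $k\leq\Delta$. The arithmetic $(4\cdot 2-2)\Delta-2\cdot 2^{2}+1=6\Delta-7$ is right.
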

\begin{remark}%
The strong chromatic index of a $5$-cycle is five, so it achieves the upper bound in \autoref{Coro}, thus the obtained upper bound is best possible in some sense. 
\end{remark}

Next, we investigate a class of graphs whose all $3^{+}$-vertices induce a forest. 

\begin{lemma}\label{nice}%
If $G$ is a graph with at least one edge such that all the $3^{+}$-vertices induce a forest, then there exists a vertex $w$ such that at least $\max\{1, \deg_{G}(v) - 1\}$ of its neighbors are $2^{-}$-vertices. 
\end{lemma}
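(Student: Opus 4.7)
The plan is to examine the subgraph $F$ of $G$ induced by all $3^{+}$-vertices; by hypothesis $F$ is a forest, and the standard observation that every nonempty forest contains a vertex of degree at most $1$ will do almost all of the work. I would handle the case $V(F) = \emptyset$ separately and then get the main case from a single choice of a leaf of $F$.

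If $V(F) = \emptyset$, then every vertex of $G$ is a $2^{-}$-vertex. Since $G$ has at least one edge, I pick $w$ to be any endpoint of such an edge; then all $\deg_G(w)$ neighbors of $w$ are $2^{-}$-vertices, and the bound $\max\{1,\deg_G(w)-1\}$ is trivially met because $\deg_G(w) \in \{1,2\}$.

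If $V(F) \neq \emptyset$, I would take $w$ to be a vertex of $F$ with $\deg_F(w) \leq 1$, which exists because any nonempty forest has such a vertex (a leaf, or an isolated vertex of $F$). Since the edges of $F$ are precisely those edges of $G$ whose both endpoints are $3^{+}$-vertices, $w$ has at most one $3^{+}$-neighbor in $G$, so at least $\deg_G(w) - 1$ of its neighbors are $2^{-}$-vertices. As $w \in V(F)$ we have $\deg_G(w) \geq 3$, hence $\max\{1,\deg_G(w)-1\} = \deg_G(w)-1$, and we are done.

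There is essentially no obstacle here: the only thing one must not forget is that the "isolated vertex of $F$" subcase is automatically covered by allowing $\deg_F(w) = 0$ in the leaf choice, and that the $V(F)=\emptyset$ case has to be treated on its own because that is the only situation in which the $\max$ in the bound becomes active.
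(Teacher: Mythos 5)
Your proof is correct and follows essentially the same route as the paper: the paper sets $A$ to be the set of $2^{-}$-vertices and works with $G-A$, which is exactly your forest $F$, splits on whether it is empty, and otherwise takes a $1^{-}$-vertex of the forest. The only cosmetic difference is that the paper also notes up front that $A\neq\emptyset$, an observation that (as your argument implicitly shows) is not actually needed.
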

\begin{proof}%
Let $A$ be the set of $2^{-}$-vertices. It is obvious that $A \neq \emptyset$, otherwise every vertex has degree at least three, and then $G$ contains cycles. Now, consider the graph $G - A$. If $G - A$ is an empty graph, then every nonisolated vertex satisfies the desired condition. So we may assume that the  $G - A$ has at least one vertex. Since the graph $G - A$ is a forest, it follows that there exists at least one $1^{-}$-vertex in $G - A$, and every such vertex satisfies the desired condition. 
\end{proof}

\begin{theorem}\label{3+Forest}%
If $G$ is a graph such that all of its $3^{+}$-vertices induce a forest, then $\chiup_{s}'(G) \leq 4 \Delta(G) -3$.
\end{theorem}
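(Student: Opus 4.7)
The plan is to adapt the argument of \autoref{k-degenerate} with \autoref{nice} playing the role of \autoref{nice-vertex}. First I would verify that the hypothesis ``all $3^{+}$-vertices induce a forest'' is preserved under edge deletion: if $G_i$ is obtained from $G$ by removing edges, its $3^{+}$-vertices form a subset of those of $G$, so the subgraph they induce in $G_i$ is a subgraph of the forest induced in $G$, hence itself a forest. Consequently \autoref{nice} may be invoked at every stage.

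I would then construct the sequence $\Lambda_1,\dots,\Lambda_m$ as in \autoref{k-degenerate}: pick $w_i$ via \autoref{nice} in $G_i$ and set $\Lambda_i = \{w_iv \mid w_iv \in E(G_i) \mbox{ and } \deg_{G_i}(v) \leq 2\}$. The threshold $k$ is now $2$, but \autoref{nice} provides the stronger conclusion $\deg_{j+1}(w_j) \leq 1$ (rather than merely $\leq k$) after the extraction of $\Lambda_j$. The distinct-centers claim then carries over essentially verbatim, with the contradiction reached from $\deg_{s^{\ast}}(w) \geq 3$ against $\deg_{s^{\ast}}(w) \leq 1$ at the round $s^{\ast}$ where the partner vertex $v$ of an edge $w_iv \in \Lambda_i$ is forced to become a center.

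The coloring uses the palette $\{1,\dots,4\Delta-3\}$ and proceeds in the reverse order $\Lambda_m, \Lambda_{m-1},\dots,\Lambda_1$. Coloring $\Lambda_m$ requires at most $\Delta$ of the available colors. To color an edge $w_iv_i \in \Lambda_i$ for $i<m$ I would bound the number of already-colored edges incident with $N_G(w_i) \cup N_G(v_i)$. With $x_i,y_i,z_i$ for $w_i$ defined analogously to the original proof, two savings against the $k=2$ specialization of \autoref{k-degenerate} appear: \autoref{nice} gives $z_i \leq 1$, and every $u \in X_i$ was previously a center of some $\Lambda_t$, so $\deg_{t+1}(u) \leq 1$ and $u$ accounts for at most one colored edge. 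A short case analysis on whether $w_i$ (respectively $v_i$) appeared in an earlier $\Lambda$ then gives at most $2(y_i-1) + z_i\Delta \leq 3\Delta - 4$ forbidden colors from the $w_i$ side and at most $\Delta$ from the $v_i$ side, for a total of at most $4\Delta - 4$, leaving at least one color available.

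The main obstacle I anticipate is the bookkeeping on the $v_i$ side, which splits into whether $v_i$ was previously chosen as a center of some $\Lambda_s$, forcing $\deg_i(v_i) \leq 1$ so that $N_{G_i}(v_i) = \{w_i\}$ and each of the $\deg_G(v_i)-1$ removed neighbors contributes at most one colored edge (yielding the bound $\Delta - 1$), or whether $v_i$ was never a center. In the latter case the observation that $v_i$'s degree can decrease only in rounds where $v_i$ participates in some $\Lambda_s$, together with the fact that the first such round requires $\deg_s(v_i) \leq 2$, forces $\deg_G(v_i) \leq 2$ and gives a bound of at most $\Delta$. The analogous reduction $\deg_G(w_i) \leq 2$ whenever $x_i \neq 0$ keeps the $w_i$-side contribution bounded in that subcase as well.
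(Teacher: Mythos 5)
Your proposal is correct and takes essentially the same approach as the paper, which gives no details and merely declares the proof analogous to that of \autoref{k-degenerate}. You carry out that analogy faithfully, correctly exploiting the two strengthenings supplied by \autoref{nice} (each center has at most one $3^{+}$-neighbour, hence $z_{i}\leq 1$, and residual center degree at most $1$) to cap the forbidden colors at $(3\Delta-4)+\Delta=4\Delta-4$.
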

\begin{proof}%
The proof is analogous to that in \autoref{k-degenerate}, so we omit it. 
\end{proof}

A $2$-connected graph $G$ is {\em minimally $2$-connected} if $G - e$ is not $2$-connected for each edge $e$ in $G$.

\begin{theorem}[\cite{MR2078877}]%
The minimum degree of a minimally $2$-connected graph is two, and the subgraph induced by all the $3^{+}$-vertices is a forest. 
\end{theorem}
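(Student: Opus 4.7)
The plan is to establish both statements by contradiction, using the structure of open ear-decompositions of 2-connected graphs.

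For the minimum-degree assertion, I would fix any open ear-decomposition $G_{0}\subsetneq G_{1}\subsetneq\cdots\subsetneq G_{m}=G$, where $G_{0}$ is a cycle and each $G_{i}$ is obtained from $G_{i-1}$ by attaching an ear $P_{i}$ whose two endpoints lie in $V(G_{i-1})$ and whose internal vertices are new. Whitney's theorem guarantees the existence of such a decomposition since $G$ is 2-connected. If the final ear $P_{m}$ consisted of a single edge $e$, then $G-e=G_{m-1}$ would still be 2-connected, contradicting minimal 2-connectivity. Hence $P_{m}$ must have at least one internal vertex $w$, and by construction $\deg_{G}(w)=2$. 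Combined with the trivial bound $\delta(G)\geq 2$ for 2-connected graphs, we get $\delta(G)=2$.

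For the forest claim, suppose for contradiction that the subgraph induced by the $3^{+}$-vertices contains a cycle $C$. The target is to exhibit an edge $e\in E(C)$ such that $G-e$ remains 2-connected, contradicting minimality. By Menger's theorem, $G-uv$ is 2-connected if and only if $G$ admits three internally vertex-disjoint $uv$-paths. For any $e=uv\in E(C)$, the cycle itself supplies two such paths, namely $e$ and $C-e$; so the task reduces to constructing a third $uv$-path internally disjoint from $C$. Because every vertex on $C$ has degree at least $3$, each has an edge leaving $C$, which, together with the 2-connectivity of $G$, furnishes the raw material for such a detour through $G-V(C-e)$.

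The main technical obstacle is guaranteeing that for \emph{some} choice of $e\in E(C)$ the external edges can actually be assembled into a $uv$-path internally disjoint from the interior of $C-e$. An extremal choice of $e$---for instance, one that maximises the number of vertices of $V(G)\setminus V(C)$ reachable from $u$ without revisiting $C$, or that minimises a suitable attachment parameter of a bridge of $C$ in $G$---should exclude the pathological scenario in which every would-be detour is forced back onto $C$ before reaching $v$. Once such an $e$ is produced, $G-e$ is 2-connected, contradicting the assumed minimal 2-connectivity, so no cycle among $3^{+}$-vertices can exist and the $3^{+}$-vertices induce a forest.
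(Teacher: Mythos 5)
The paper states this theorem only as a citation to Bollob\'as and supplies no proof, so your argument has to stand on its own. Its first half does: fixing an open ear decomposition $G_0\subsetneq\cdots\subsetneq G_m=G$ and observing that minimality forbids the last ear from being a single edge (else $G-e=G_{m-1}$ is still $2$-connected), hence the last ear has an internal vertex of degree $2$ in $G$, is a complete and correct proof that $\delta(G)=2$, with the degenerate case $G=G_0$ a cycle being trivial.

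The second half has a genuine gap, and it sits exactly where the difficulty of the theorem lives. Two problems. First, your Menger reduction is too restrictive: $G-uv$ is $2$-connected iff $G$ contains \emph{some} three internally disjoint $u$--$v$ paths, but you insist that two of them be $uv$ and $C-uv$, which forces the third to avoid every vertex of $V(C)\setminus\{u,v\}$, i.e.\ to live inside a single bridge of $C$ attaching at two consecutive vertices. That is strictly stronger than what is needed and can fail even when three internally disjoint paths exist: take the $6$-cycle $v_1\dots v_6$ together with $a$ adjacent to $v_1,v_3,v_5$ and $b$ adjacent to $v_2,v_4,v_6$; no edge of the cycle admits a third path internally avoiding $V(C)$, yet $v_1v_2$, $v_1av_3v_2$ and $v_1v_6bv_2$ are three internally disjoint $v_1$--$v_2$ paths (so $G-v_1v_2$ is $2$-connected and this $G$ is not minimally $2$-connected, consistent with the theorem). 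So the configuration you are hunting for need not exist, and any rescue must use minimality in an essential way that you never invoke. Second, even granting the reduction, the step that actually produces the edge $e$ is missing: you name ``an extremal choice of $e$'' that ``should exclude the pathological scenario'' without specifying the extremal parameter or verifying that it works --- this is precisely the content of the theorem, not a routine detail. (A smaller issue: a vertex of $C$ of degree at least $3$ has an edge leaving $V(C)$ only if $C$ is chordless, which requires first proving that no cycle of a minimally $2$-connected graph has a chord; this is true and follows from your own Menger criterion applied to a chord, but it is not stated.) As written, the forest claim --- the substantive half of the statement --- is not proved.
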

\begin{corollary}%
If $G$ is a minimally $2$-connected graph, then $\chiup_{s}'(G) \leq 4\Delta(G) - 3$.
\end{corollary}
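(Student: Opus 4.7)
The plan is essentially a one-line deduction: combine the structural theorem cited from \cite{MR2078877} with \autoref{3+Forest}. Once the hypothesis of \autoref{3+Forest} is verified for a minimally $2$-connected graph $G$, the bound $\chiup_{s}'(G) \leq 4\Delta(G) - 3$ follows immediately.

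More concretely, I would argue as follows. Let $G$ be a minimally $2$-connected graph. The cited theorem from \cite{MR2078877} provides two pieces of structural information: first, the minimum degree of $G$ equals two (so $G$ has at least one $2^{-}$-vertex, which is what is used under the hood in \autoref{nice}); second, the subgraph of $G$ induced by all of its $3^{+}$-vertices is a forest. The second conclusion is exactly the hypothesis of \autoref{3+Forest}. Therefore \autoref{3+Forest} applies verbatim to $G$, and yields $\chiup_{s}'(G) \leq 4\Delta(G) - 3$.

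There is no real obstacle in this step; the substantive work has already been done in establishing the structural result of \cite{MR2078877} and in proving \autoref{3+Forest} (which, in turn, inherits its main idea from \autoref{k-degenerate} via the nice-vertex \autoref{nice}, applied inductively in reverse order to build a star-decomposition $\Lambda_{1},\dots,\Lambda_{m}$ and then coloring greedily from $\Lambda_{m}$ back to $\Lambda_{1}$). The only thing one might want to double-check is that the degenerate case $\Delta(G) = 2$ is not pathological: in this case $G$ is a single cycle, all vertices have degree two, the induced subgraph on $3^{+}$-vertices is vacuously a forest, and the bound $4\Delta(G) - 3 = 5$ is certainly enough (it is known that $\chiup_{s}'(C_{n}) \leq 5$ for all $n \geq 3$). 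So the corollary is an immediate consequence, with no further combinatorial argument required.
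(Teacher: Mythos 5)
Your proposal is correct and is exactly the deduction the paper intends: the cited theorem of Bollob\'as guarantees that the $3^{+}$-vertices of a minimally $2$-connected graph induce a forest, so \autoref{3+Forest} applies directly and gives $\chiup_{s}'(G) \leq 4\Delta(G) - 3$. Your extra check of the $\Delta(G)=2$ (cycle) case is harmless but not needed, since \autoref{3+Forest} already covers it.
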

\begin{remark}%
Once again the strong chromatic index of a $5$-cycle achieves the upper bound in \autoref{3+Forest}, so the upper bound is best possible in some sense. 
\end{remark}

A graph is {\em chordless} if every cycle is an induced cycle. It is easy to show that a $2$-connected graph is chordless if and only if it is minimally $2$-connected. Every chordless graph is a $2$-degenerate graph. In \cite{MR3056878}, Chang and Narayanan proved that the strong chromatic index of a chordless graph is at most $8\Delta(G) - 6$, and D{\k e}bski \etal \cite{2013arXiv1301.1992D} improved the upper bound to $4\Delta(G) -3$, but I doubt that their proofs are correct since they incorrectly used a lemma (see \cite[Lemma 7]{MR3056878}). Recently, Narayanan (private communication, Mar. 2014) has confirmed the mistake in the proof of the result $8\Delta(G) - 6$. For more detailed discussion on the strong chromatic index of chordless graphs, we refer the reader to \cite{Basavaraju2013}. 

\begin{remark}%
All the proofs used the greedy algorithm, thus all the results are true for the list version of strong edge coloring.
\end{remark}

\vskip 3mm \vspace{0.3cm} \noindent{\bf Acknowledgments.} The author would like to thank the anonymous reviewers for their valuable comments and assistance on earlier drafts. The author would also like to especially thank N. Narayanan for the discussion on Lemma 7 and Theorem 2 in \cite{MR3056878}. In addition, the author was supported by NSFC (11101125).


\begin{thebibliography}{1}

\bibitem{Basavaraju2013}
M.~Basavaraju and M.~C. Francis, Strong chromatic index of chordless graphs,
  eprint arXiv:1305.2009v2  (2013).

\bibitem{MR2078877}
B.~Bollob{\'a}s, Extremal graph theory, Dover Publications, Inc., Mineola, NY,
  2004. xx+488 pp.

\bibitem{MR3056878}
G.~J. Chang and N.~Narayanan, Strong chromatic index of 2-degenerate graphs, J.
  Graph Theory 73 (2013)~(2) 119--126.

\bibitem{2013arXiv1301.1992D}
M.~D{\k e}bski, J.~Grytczuk and M.~{\v S}leszy{\' n}ska-Nowak, Strong chromatic
  index of sparse graphs, eprint arXiv:1301.1992v1  (2013).

\bibitem{MR975526}
P.~Erd{\H{o}}s, Problems and results in combinatorial analysis and graph
  theory, Discrete Math. 72 (1988)~(1-3) 81--92.

\bibitem{2012arXiv1212.6092L}
R.~Luo and G.~Yu, A note on strong edge-colorings of 2-degenerate graphs,
  eprint arXiv:1212.6092v1  (2012).

\bibitem{2012arXiv1212.6093Y}
G.~Yu, Strong edge-colorings for $k$-degenerate graphs, eprint
  arXiv:1212.6093v3  (2013).

\end{thebibliography}
\end{document}